\documentclass[pdflatex,sn-mathphys-num]{sn-jnl}

\usepackage{graphicx}%
\usepackage{multirow}%
\usepackage{amsmath,amssymb,amsfonts}%
\usepackage{amsthm}%
\usepackage{mathrsfs}%
\usepackage[title]{appendix}%
\usepackage{xcolor}%
\usepackage{textcomp}%
\usepackage{manyfoot}%
\usepackage{booktabs}%
\usepackage{algorithm}%
\usepackage{algorithmicx}%
\usepackage{algpseudocode}%
\usepackage{listings}%

\usepackage{url}

\theoremstyle{thmstyleone}%
\theoremstyle{thmstyletwo}%

\theoremstyle{thmstylethree}%
 \newtheorem{thm}{Theorem}[section]
 
 \newtheorem{lem}[thm]{Lemma}
 
 \theoremstyle{definition}
 
 \theoremstyle{remark}
 \newtheorem{rem}[thm]{Remark}
 
 \numberwithin{equation}{section}

\begin{document}

\title[Optimal bounds for the ratio of differences of means]{Optimal bounds for the ratio of differences of quadratic, arithmetic, and harmonic means}


\author*[1]{\fnm{Yagub} N. \sur{Aliyev}}\email{yaliyev@ada.edu.az}

\author[2]{\fnm{Narmin} N. \sur{Aliyeva}}\email{nermin.aliyeva@idrak.edu.az}

\author[1]{\fnm{Zamina} E. \sur{Guliyeva}}\email{zeguliyeva@ada.edu.az}

\equalcont{These authors contributed equally to this work.}

\affil*[1]{\orgdiv{School of IT and Engineering}, \orgname{ADA University}, \orgaddress{\street{61 Ahmadbay Agha-Oglu Street}, \city{Baku }, \postcode{AZ1008}, \country{Azerbaijan}}}

\affil[2]{\orgdiv{Department of Differential Equations and Control Theory}, \orgname{Baku State University}, \orgaddress{\street{Academic Zahid Khalilov str. 23}, \city{Baku}, \postcode{AZ1148}, \country{Azerbaijan}}}


\abstract{We determine the optimal constants in inequalities comparing the differences of the quadratic, arithmetic, and harmonic means of $n$ nonnegative real numbers. Specifically, we prove that for $n\ge3$ the sharp double inequality
\[
\frac{1}{\sqrt{n}}\le \frac{A_n-H_n}{Q_n-H_n}\le \sqrt{\frac{n-1}{n}}
\]
holds true.
This extends earlier results by T. Mitev, which established the sharp bounds only for the cases $n=3,4,$ and $5$. Our approach is based on a variant of the classical optimization method of Cauchy and Maclaurin, in which a quadratic symmetric function is optimized under simultaneous constraints on the arithmetic and harmonic means.
}

\keywords{Arithmetic mean,
Harmonic mean,
Quadratic mean,
Mean inequalities,
Sharp inequalities,
Cauchy--Maclaurin method.}

\pacs[MSC Classification (AMS)]{Primary 26E60;
Secondary 26D15.}

\maketitle

\section{Introduction}\label{sec1}

Let $H_n$, $A_n$ and $Q_n$ be the harmonic, arithmetic and quadratic means, respectively, of nonnegative real numbers $x_1,\ldots,x_n$:

$$H_n=\frac{n}{\sum\limits_{i =
1}^n{\frac{1}{x_{i}}}},\ 
A_n=\frac{\sum\limits_{i =
1}^n{x_{i}}}{n},\ Q_n=\sqrt{\frac{\sum\limits_{i =
1}^n{x_{i}^2}}{n}}.
$$
If there is an $i$ such that $x_i=0$, then we assume that $H_n=0$ (see \cite{hardy}, p. 12). In \cite{mitev1} and \cite{mitev2}
the best constants for the inequalities $C_1\le \frac{A_n-H_n}{Q_n-H_n}\le C_2$ are studied for the cases $n=3,4,5$. In \cite{mitev1} and \cite{mitev2} it was shown that if $n=3$, then $C_1\le\frac{\sqrt{3}}{3}$ and $C_2\ge\frac{\sqrt{6}}{3}$. Similarly, if $n=4$, then $C_1\le\frac{1}{2}$ and $C_2\ge\frac{\sqrt{3}}{2}$. Finally, if $n=5$, then $C_1\le\frac{\sqrt{5}}{5}$ and $C_2\ge\frac{2\sqrt{5}}{5}$.
In the current paper, we generalize these results by considering all the cases $n\ge 3$ in a unified way. The following result is obtained.

 \begin{thm}
     If $n\ge 3$, then  the inequalities $C_1\le \frac{A_n-H_n}{Q_n-H_n}\le C_2$ hold true for all nonnegative real numbers $x_1,\ldots,x_n$ if and only if $C_1\le\frac{1}{\sqrt{n}}$ and $C_2\ge\sqrt{\frac{n-1}{n}}$. Equality in the left inequality holds if and only if $C_1=\frac{1}{\sqrt{n}}$, and all $x_i$ are equal to zero except one. Equality in the right inequality holds if and only if $C_2=\sqrt{\frac{n-1}{n}}$, and all $x_i$ are equal to each other except one which is zero.
 \end{thm}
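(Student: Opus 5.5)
We will prove the two inequalities $\frac{1}{\sqrt n}(Q_n-H_n)\le A_n-H_n\le\sqrt{\frac{n-1}{n}}\,(Q_n-H_n)$ for all nonnegative $x_i$. We will also show that they are strict except at the stated configurations. Those configurations attain the constants, so this gives the ``only if'' parts and the equality cases. The expression is homogeneous of degree $0$, and it is undefined only when all $x_i$ are equal. So we may normalize, for instance $A_n=1$.

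\emph{Step 1: the case $H_n=0$.} Suppose some $x_i=0$. Then the ratio is $A_n/Q_n$. Let $m\le n$ be the number of nonzero entries. Cauchy--Schwarz on those $m$ entries gives $(\sum x_i)^2\le m\sum x_i^2$, that is, $A_n/Q_n\le\sqrt{m/n}\le\sqrt{(n-1)/n}$, since at least one entry is zero. Equality requires $m=n-1$ with the nonzero entries equal. The lower bound $\sum x_i\ge(\sum x_i^2)^{1/2}$ holds for nonnegative numbers, with equality iff at most one entry is nonzero. This gives $A_n/Q_n\ge 1/\sqrt n$, with the stated equality case.

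\emph{Step 2: reduction for positive entries (Cauchy--Maclaurin variant).} Now let all $x_i>0$ and fix $A_n$ and $H_n$. The ratio is $(A_n-H_n)/(Q_n-H_n)$, and with $A_n,H_n$ fixed it is monotone decreasing in $Q_n$. So it suffices to find the supremum and infimum of $\sum x_i^2$ subject to $\sum x_i$ and $\sum 1/x_i$ fixed. The constraint set is compact in the open orthant except where it approaches the boundary, and near the boundary we are pushed back to Step 1 by continuity. At an interior extremum, the Lagrange conditions read $2x_i-\lambda+\mu/x_i^2=0$, i.e. each $x_i$ is a positive root of $2x^3-\lambda x^2+\mu=0$. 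By Descartes' rule of signs this cubic has at most two positive roots. Hence every interior extremal point has exactly two distinct values: $k$ entries equal to $t$ and $n-k$ entries equal to $1$, with $1\le k\le n-1$ and $t>0$, $t\ne1$, after scaling.

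\emph{Step 3: the two-value inequalities.} For such a point we have
\[
A_n=\tfrac{kt+n-k}{n},\qquad H_n=\tfrac{nt}{k+(n-k)t},\qquad Q_n=\sqrt{\tfrac{kt^2+n-k}{n}}.
\]
We must show, strictly for $t\in(0,\infty)\setminus\{1\}$ and every $k$,
\[
\sqrt n\,(A_n-H_n)>Q_n-H_n \quad\text{and}\quad \sqrt n\,(A_n-H_n)<\sqrt{n-1}\,(Q_n-H_n).
\]
Strictness here means that the bounds are approached only as $t\to0$ or $t\to\infty$. Those limits are exactly the boundary configurations of Step 1: one nonzero entry arises when $k=1$, $t\to\infty$, and one zero entry arises when $k=1$, $t\to0$.

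This step is the main obstacle. We plan to isolate $Q_n$ and square, using that both sides are positive, which requires a separate check. Clearing denominators then turns each inequality into the nonnegativity of an explicit polynomial in $t$ with coefficients depending on $n,k$. It should be divisible by $(t-1)^2$, and we would try to show that the quotient has nonnegative coefficients after the substitution $t=1+s$ (for $t>1$), and similarly for $t<1$ (or via $t\mapsto 1/t$ with $k\mapsto n-k$). If the $k$-dependence is awkward, we will first show that the extremal value over $k$ occurs at $k=1$ or $k=n-1$ by treating $k$ as a real parameter, and then verify only those cases.
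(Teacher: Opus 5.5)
\textbf{The gap is Step 3.} It is not a technicality; it is the whole analytic content of the theorem. You only announce a plan (``should be divisible by $(t-1)^2$'', ``we would try to show that the quotient has nonnegative coefficients'', ``if the $k$-dependence is awkward\ldots''), and none of these claims is checked. Nothing in your Steps 1--2 uses $n\ge 3$, yet for $n=2$ the lower bound is false: the ratio tends to $2/3<1/\sqrt2$ as the $x_i$ approach a common value. So the sharp constants and the hypothesis $n\ge3$ can only enter through the inequality you postponed. After squaring and clearing denominators, that inequality is a polynomial inequality with coefficients in $\mathbb{Z}[\sqrt n]$ depending on both $n$ and $k$. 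Its sign-definiteness for every $k$ and every $t\ne1$ is exactly what has to be proved. As written, you have established only the case $H_n=0$ plus a correct reduction. (A minor point on Step 2: the fibre $\{\sum x_i=S,\ \sum 1/x_i=R\}$ lies in $\{x_i\ge 1/R\}$, so it is compact and never approaches the boundary, and the continuity remark is unnecessary. You should also note that the two constraint gradients are independent unless all $x_i$ are equal.)

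\textbf{Part of the difficulty is self-inflicted.} First-order Lagrange conditions give two values but not which one is repeated, so you are left with every $k$. The paper avoids this with its three-variable lemma: with $x+y+z$ and $\frac1x+\frac1y+\frac1z$ fixed, $x^2+y^2+z^2$ strictly decreases as the middle variable increases. Hence the maximum is at $x=y<z$ and the minimum at $x<y=z$. Applied to triples, this shows the maximizer of $Q_n$ has its larger value occurring once and the minimizer has its smaller value occurring once. So only the family $x_1=\dots=x_{n-1}=x$, $x_n=1-(n-1)x$ (sum normalized to $1$) must be studied.

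You can get the same conclusion inside your framework from second-order conditions. If $a<b$ are the positive roots of $2x^3-\lambda x^2+\mu$, then $a<\lambda/3<b$. If $x_i=x_j$, the vector $e_i-e_j$ is tangent to the constraint set. In that direction the Hessian of the Lagrangian has the sign of $x_i^3-\mu=x_i^2(3x_i-\lambda)$, which is positive at $b$ and negative at $a$. Thus a repeated $b$ excludes a maximum and a repeated $a$ excludes a minimum.

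The paper then finishes the one-parameter problem as follows. It writes the ratio as $1/(1-f)$ with $f=(Q_n-A_n)/(H_n-A_n)$. It shows $q'/h'$ is increasing, so $f$ increases in $x$ by the L'H\^opital-type monotonicity rule. The sign analysis uses $(n^2-6n+6)x-n+3<0$, which is precisely where $n\ge3$ enters. Finally it computes the endpoint limits $1-\sqrt n$ and $1-\sqrt{n/(n-1)}$. An argument of this kind, or your polynomial check actually carried out for this single family, is what is missing.
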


The main contribution of this paper is a complete solution of the problem for arbitrary $n\ge3$. We determine the best possible constants and characterize all equality cases. For the proof we use a variant of classical optimization argument by Cauchy \cite{cau} (see also \cite{polya1}, p. 64) and Maclaurin \cite{Mc}. In the classical variant $x_1x_2$ is maximized while keeping $x_1+x_2$ as constant. In the current paper we optimize $x_1^2+x_2^2+x_3^2$ while keeping $x_1+x_2+x_3$ and $\frac{1}{x_1}+\frac{1}{x_2}+\frac{1}{x_3}$ as constant (see \cite{aliyev2025extremal, aliyev1, aliyev2}).

The inequalities of the above type have applications in geometry for proving inequalities involving volume, radii of circumscribed and inscribed spheres, and other elements of $n$-simplexes \cite{wu0}, and in linear algebra for proving inequalities involving permanent and the eigenvalues of matrices \cite{wen2}. The means $H_n$, $A_n$ and $Q_n$ frequently appear in probability, statistics, and their applications in natural and social sciences. Therefore, the obtained result above can be interpreted in many different contexts for various purposes.

\section{Lemmas}

We will need the following limit to continuously extend $\frac{A_n - H_n}{Q_n - H_n}$ to its discontinuity points $x_1=\cdots =x_n$.
\begin{lem}
    \textit{If $x>0$, then$$ \lim_{(x_1, x_2, \dots, x_n) \to (x, x, \dots, x)} \frac{A_n - H_n}{Q_n - H_n} = \frac{2}{3}. \eqno(2.1)$$}
\end{lem}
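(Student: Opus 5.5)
Write $x_i = x(1+t_i)$ with $t=(t_1,\dots,t_n)\to 0$. Both $A_n-H_n$ and $Q_n-H_n$ are homogeneous of degree one, so I may take $x=1$. The plan is to expand each mean to second order in $t$ and show that numerator and denominator both start at order $|t|^2$, with proportional leading terms.

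Set $s_1=\sum t_i$ and $s_2=\sum t_i^2$. The second-order expansions are:
\[
A_n = 1+\frac{s_1}{n},
\]
\[
H_n = \frac{n}{\sum (1-t_i+t_i^2+O(|t|^3))} = 1+\frac{s_1}{n}-\frac{s_2}{n}+\frac{s_1^2}{n^2}+O(|t|^3),
\]
\[
Q_n = \Bigl(1+\frac{2s_1}{n}+\frac{s_2}{n}\Bigr)^{1/2} = 1+\frac{s_1}{n}+\frac{s_2}{2n}-\frac{s_1^2}{2n^2}+O(|t|^3).
\]
Let $D=\frac{s_2}{n}-\frac{s_1^2}{n^2}$. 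This is the variance of the $t_i$, so $D\ge 0$, and $D>0$ exactly when the $t_i$ are not all equal. Subtracting gives
\[
A_n-H_n = D+O(|t|^3),\qquad Q_n-H_n=\tfrac32 D+O(|t|^3).
\]
So the ratio is formally $2/3$.

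The main obstacle is that $D$ can be much smaller than $|t|^2$. This happens when $t$ is nearly a multiple of $(1,\dots,1)$, and then the cubic remainders are not controlled by $D$. I would remove this degeneracy by rescaling. Any point $(x_1,\dots,x_n)$ can be written as $\lambda(1+u_1,\dots,1+u_n)$ with $\lambda=A_n$ and $\sum u_i=0$. By homogeneity the ratio depends only on $u$. As $(x_1,\dots,x_n)\to(x,\dots,x)$ we have $\lambda\to x>0$, hence $u\to 0$.

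With $s_1=0$ the expansions give $D=s_2/n=|u|^2/n$, which is comparable to $|u|^2$. The remainders are then $O(|u|^3)=o(D)$. Therefore
\[
\frac{A_n-H_n}{Q_n-H_n}=\frac{D+O(|u|^3)}{\tfrac32D+O(|u|^3)}\to\frac23
\]
along every approach with $u\ne 0$, and this is the claimed limit (2.1).

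The remainder bounds should be stated explicitly. For $|u_i|\le 1/2$, Taylor's theorem with remainder gives $1/(1+u)=1-u+u^2+O(u^3)$ and $\sqrt{1+v}=1+v/2-v^2/8+O(v^3)$ with uniform constants. This makes the $O(|u|^3)$ bounds legitimate and completes the argument.
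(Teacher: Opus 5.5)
Your proof is correct and follows essentially the same route as the paper. Both arguments use homogeneity to normalize $A_n=1$, so that the deviations $u_i$ sum to zero, then expand $H_n=1-\frac{1}{n}\sum u_i^2+o(|u|^2)$ and read off the limit; the paper differs only cosmetically, using the exact identity $Q_n^2-A_n^2=\frac{1}{n}\sum u_i^2$ to get $\frac{A_n-H_n}{Q_n-A_n}\to 2$ instead of expanding the square root, while your explanation of why the normalization is needed (the degeneracy of $D$ along the diagonal direction) spells out a step the paper passes over with ``without loss of generality''.
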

\begin{proof}
Let us first calculate the following limit
   \begin{align*}
& \lim_{(x_1, x_2, \dots, x_n) \to (x, x, \dots, x)} \frac{A_n-H_n}{Q_n-A_n}\\
& = \lim_{(x_1, x_2, \dots, x_n) \to (x, x, \dots, x)} \frac{(A_n-H_n)(Q_n+A_n)}{Q_n^2-A_n^2}.
   \end{align*}
   Without loss of generality, assume $x=1$ and $A_n=1$. Hence, $x_i\to 1$ and $x_i=1+\alpha_i$, where, $\alpha_i\to 0.$
   Using these, we obtain
   \begin{align*}
H_n &= \frac{n}{\sum_{i=1}^{n} \frac{1}{x_i}} = \frac{n}{\sum_{i=1}^{n} \frac{1}{1+\alpha_i}}= \frac{n}{\sum_{i=1}^{n}\left(1 - \alpha_i + \alpha_i^2 + o(\alpha_i^2)\right)}  \\
&= \frac{n}{n - \sum_{i=1}^{n} \alpha_i + \sum_{i=1}^{n} \alpha_i^2 + \sum_{i=1}^{n} o(\alpha_i^2)} \\
&= \frac{1}{1 - \dfrac{\sum_{i=1}^{n} (\alpha_i - \alpha_i^2 + o(\alpha_i^2))}{n}} \\
&= 1 + \frac{\sum_{i=1}^{n} (\alpha_i - \alpha_i^2 + o(\alpha_i^2))}{n} + \frac{\left(\sum_{i=1}^{n} (\alpha_i - \alpha_i^2 + o(\alpha_i^2))\right)^2}{n^2} \\
&\quad + o\left(\left(\frac{\sum_{i=1}^{n} (\alpha_i - \alpha_i^2 + o(\alpha_i^2))}{n}\right)^2\right) \\
&= 1 - \frac{\sum_{i=1}^{n} \alpha_i^2}{n} + o\left(\sum_{i=1}^{n} \alpha_i^2\right).
\end{align*}
The above formula for $H_n$ can also be adopted from one of the exercises presented in \cite{statistics} (p. 150, Ex. 6.13). One can directly check that
   \begin{align*}
       Q_n^2=A_n^2+\frac{\sum_{i=1}^{n}(x_i-A_n)^2}{n}=1+\frac{\sum_{i=1}^{n}\alpha_i^2}{n}.
   \end{align*}
The same result for $Q_n^2$ can be obtained using more general results from \cite{scheib}.
Then,
   \begin{align*}
         \lim_{(\alpha_1, \alpha_2, \dots, \alpha_n) \to (0, 0, \dots, 0)} \frac{\left[ A_n - 1 + \frac{1}{n}\sum_{i=1}^{n} \alpha_i^2 + o\left(\sum_{i=1}^{n} \alpha_i^2\right) \right] \cdot (Q_n + A_n)}{\frac{\sum_{i=1}^{n} \alpha_i^2}{n}} = \\[2ex]
= \lim_{(\alpha_1, \alpha_2, \dots, \alpha_n) \to (0, 0, \dots, 0)} \frac{\left[ \frac{\sum_{i=1}^{n} \alpha_i^2}{n} + o\left(\sum_{i=1}^{n} \alpha_i^2\right) \right]}{\frac{\sum_{i=1}^{n} \alpha_i^2}{n}} \cdot 2 = 2 \\[3ex]
   \end{align*}
Therefore,
   \begin{align*}
       \lim_{(x_1, x_2, \dots, x_n) \to (x, x, \dots, x)}\frac{1}{\frac{1}{\frac{A_n - H_n}{Q_n - H_n}} - 1} = 2,
   \end{align*}
from which (2.1) follows.
\end{proof}
The following lemma justifies the optimization argument with 3 variables.
\begin{lem}
    \textit{Let $x, y$, and $z$ be positive real numbers such that $x+y+z=\beta$ and $\frac{1}{x}+\frac{1}{y}+\frac{1}{z}=\alpha$, where $\alpha$ and $\beta$ are positive constants such that ${\alpha}{\beta}\ge9$. Suppose also that $x,y,z$ satisfy $x\le y\le z$ in such a way that that at least one of the inequalities is strict. Then the function $g(x,y,z)=x^2+y^2+z^2$ decreases when $y$ increases, and the maximum and minimum of the function $g(x,y,z)$ occurs when $x=y<z$ and $x<y=z$, respectively.}
\end{lem}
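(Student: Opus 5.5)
Fixing $x+y+z=\beta$ and $\frac1x+\frac1y+\frac1z=\alpha$ leaves a curve, namely the intersection of a plane with a level surface. I will parametrize this curve by $y$ and differentiate $g$ along it. Since $g=x^2+y^2+z^2=\beta^2-2(xy+yz+zx)$, the task is equivalent to studying $p:=xy+yz+zx$. The second constraint gives $\frac{p}{xyz}=\alpha$, so $xyz=p/\alpha$. Thus $x,y,z$ are the roots of
\[
t^3-\beta t^2+pt-\frac{p}{\alpha}=0 .
\]
Equivalently, every point of the curve satisfies $p\,(1-t/\alpha)\cdot\frac{1}{?}$-type relations; more usefully, each root $t$ satisfies
\[
p\Bigl(t-\frac1\alpha\Bigr)=t^2(\beta-t),
\qquad\text{so}\qquad
p=\frac{t^2(\beta-t)}{t-1/\alpha}.
\]
In particular, $y$ determines $p$ through the function $\varphi(t)=\frac{t^2(\beta-t)}{t-1/\alpha}$, and hence determines $g=\beta^2-2\varphi(y)$. (One checks that $t>1/\alpha$ for every root, since $1/t<\alpha$.)

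The key steps, in order, are as follows.

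\textbf{Step 1: set up the implicit derivative.} Differentiate the two constraints along the curve: $dx+dy+dz=0$ and $dx/x^2+dy/y^2+dz/z^2=0$. Solving for $dx/dy$ and $dz/dy$ (valid when $x\ne z$) gives
\[
\frac{dx}{dy}=\frac{x^2(z^2-y^2)}{y^2(x^2-z^2)},\qquad
\frac{dz}{dy}=\frac{z^2(y^2-x^2)}{y^2(x^2-z^2)}.
\]

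\textbf{Step 2: compute $dg/dy$.} Substitute into $\tfrac12\,dg/dy=x\,dx/dy+y+z\,dz/dy$ and factor. I expect the result to simplify to a constant multiple of
\[
\frac{(x-y)(y-z)(\text{positive})}{y^2(x+z)}.
\]
Concretely, writing $y^2(x^2-z^2)\cdot\tfrac12 g'=x^3(z^2-y^2)+y^3(x^2-z^2)+z^3(y^2-x^2)$, the right-hand side is the standard alternant, equal to $-(x-y)(y-z)(z-x)(xy+yz+zx)$ up to sign. Dividing by $x^2-z^2=(x-z)(x+z)$ leaves
\[
\frac{dg}{dy}=\pm\frac{2(x-y)(y-z)(xy+yz+zx)}{y^2(x+z)}.
\]
Under $x\le y\le z$ we have $(x-y)(y-z)\ge0$, and tracking the sign carefully should give $dg/dy<0$ in the interior $x<y<z$. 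This is the claimed monotonicity.

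\textbf{Step 3: locate the extremes.} With $x\le y\le z$ and the configuration not all equal, the set of attainable $y$ is an interval. Its endpoints are where two variables coincide, since at an endpoint the map $y\mapsto(x,z)$ ceases to be smooth, which happens exactly when $x=y$ or $y=z$. This can be seen from the cubic: as $p$ varies, the roots stay real only between the two values of $p$ at which the discriminant vanishes, i.e. where there is a double root. The condition $\alpha\beta\ge9$, by AM–HM, guarantees that the curve is nonempty. Since $g$ decreases in $y$, its maximum is at the smallest $y$ and its minimum at the largest $y$. The smallest $y$ occurs at $x=y<z$ and the largest at $x<y=z$, which gives the stated conclusion.

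\textbf{Main obstacle.} The delicate point is getting the sign in Step 2 right. I will verify it with a numeric example, e.g. a small perturbation near $(1,1,2)$. The remaining work is to justify rigorously that, for fixed $\alpha,\beta$, the constrained ordered set is a connected arc whose $y$-range has endpoints exactly at the double-root configurations. I will argue this via the monotone relation between $p$ and the discriminant of the cubic above.
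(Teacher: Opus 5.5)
Your Steps 1--2 follow the paper's route: parametrize the ordered arc by $y$, differentiate the two constraints implicitly, and factor $g'$. Your $dx/dy$ and $dz/dy$ are correct; the paper's displayed $z'_t$ even has $x^2-y^2$ where $y^2-x^2$ belongs. The genuine gap is that the proposal never settles the sign of $g'$, and that sign is the whole monotonicity claim. You leave it as $\pm$, and the sign you tentatively write, $-(x-y)(y-z)(z-x)(xy+yz+zx)$, is the wrong one. The correct identity is
\[
x^3(z^2-y^2)+y^3(x^2-z^2)+z^3(y^2-x^2)=(x-y)(y-z)(z-x)(xy+yz+zx),
\]
which you can check at $(0,1,2)$, where both sides equal $4$. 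With your minus sign you would conclude $g'>0$. With the correct sign, dividing by $y^2(x-z)(x+z)$ gives $g'=-\frac{2(x-y)(y-z)(xy+yz+zx)}{y^2(x+z)}$. This is the paper's formula, and it is negative for $x<y<z$. The proof has to establish this sign rather than defer it to ``tracking the sign carefully''.

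Step 3 is not yet an argument either, and the reason it gives is false. The map $y\mapsto(x,z)$ is smooth wherever $x\ne z$, since the linear system of Step 1 has determinant $1/z^2-1/x^2$. In particular it is smooth at $x=y$ and at $y=z$. The ordered arc ends there because the constraint $x\le y\le z$becomes active, not because smoothness fails. Your discriminant-in-$p$ idea locates the ends of the $p$-range. You would still need a monotone link between $p$ and $y$, and you would need to decide which double-root type sits at which end.

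The paper does this concretely. With $x,z$ the roots of $w^2-(\beta-y)w+\frac{y(\beta-y)}{\alpha y-1}$, the value $y$ lies between them iff $\kappa^*(y)=2\alpha y^2-(\alpha\beta+3)y+2\beta\le0$. That means $y\in[t_1^*,t_2^*]$ with $t_1^*<\beta/3<t_2^*$, so $x=y<z$ at $t_1^*$ and $x<y=z$ at $t_2^*$.

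Your own abandoned observation $g=\beta^2-2\varphi(y)$ would close both gaps at once. A direct computation gives $\varphi'(y)=-\frac{\alpha y\,\kappa^*(y)}{(\alpha y-1)^2}$. For $P(t)=(t-x)(t-y)(t-z)$ one also has $(y-x)(y-z)=P'(y)=\frac{y\,\kappa^*(y)}{\alpha y-1}$. Hence $g'(y)=\frac{2\alpha(y-x)(y-z)}{\alpha y-1}<0$ in the interior, and the ordered arc is exactly $\{y:\kappa^*(y)\le0\}$. So the monotonicity and the location of both endpoints come from the single quadratic $\kappa^*$, with no alternant factorization needed.
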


\begin{proof}
    $x,y,z$ satisfying $x+y+z=\beta$ and $\frac{1}{x}+\frac{1}{y}+\frac{1}{z}=\alpha$ can be parametrized as
    \begin{align*}
        x = \frac{\beta - t \pm \sqrt{(\beta - t)^2 - \frac{4t(\beta - t)}{\alpha t - 1}}}{2},
        y=t,
        z = \frac{\beta - t \pm \sqrt{(\beta - t)^2 - \frac{4t(\beta - t)}{\alpha t - 1}}}{2},
    \end{align*}
    where, $t\in [t_1,t_2]$, and $t_1\in \left(0,\frac{\beta}{3}\right),$ $t_2\in \left(\frac{\beta}{3},\beta\right)$ are the zeros of the quadratic function $\kappa(t)=\alpha t^2-(\beta \alpha -3)t+\beta$.
    Indeed, $\kappa(0)= \beta>0$, $\kappa(\beta)=\alpha\beta^2-(\alpha\beta -3)\beta+\beta=\alpha\beta^2-\alpha\beta^2+4\beta=4\beta>0$, $\kappa(\frac{\beta}{3})=\alpha\frac{\beta^2}{9}-(\alpha\beta-3)\frac{\beta}{3}+\beta=\frac{\alpha\beta^2}{9}-\frac{\alpha\beta^2}{3}+2\beta=-\frac{2\alpha\beta^2}{9}+2\beta<0$. Therefore, the roots $t_1$ and $t_2$ of $\kappa(t)$ are located as it is shown above. The part of the curve satisfying $x\le y\le z$ is parametrized as
$$
x = \frac{\beta - t - \sqrt{(\beta - t)^2 - \frac{4t(\beta - t)}{\alpha t - 1}}}{2},\ y=t,\ z = \frac{\beta - t + \sqrt{(\beta - t)^2 - \frac{4t(\beta - t)}{\alpha t - 1}}}{2},
$$
where $t\in [t^*_1,t^*_2]$, and $t^*_1\in \left(t_1,\frac{\beta}{3}\right)$, $t^*_2\in \left(\frac{\beta}{3},t_2\right)$ are the zeros of the quadratic function $\kappa^*(t)=2 \alpha t^2-(\beta \alpha+3)t+2\beta$ (see Figure 1). Note that, $\kappa^*(0)=2\beta>0$, $\kappa^*(\beta)=2\alpha\beta^2-(\alpha\beta+3)\beta+2\beta=\alpha\beta^2-2\beta=\beta(\alpha\beta-2)>0$, $\kappa^*(\frac{\beta}{3})=2\alpha\frac{\beta^2}{9}-(\alpha\beta+3)\frac{\beta}{3}+2\beta=-\frac{\alpha\beta^2}{9}+\beta=\frac{9-\alpha\beta}{9}<0$. Moreover, $\kappa^*-\kappa=\alpha t^2-6t+\beta>0$. Indeed, the discriminant is $\Delta=36-4\alpha\beta<0$. Therefore, the roots $t_1^*$, $t_2^*$ of $\kappa^*(t)$ are between the roots $t_1$, $t_2$ of $\kappa(t)$. Specifically, the boundary values $t=t^*_1$ and $t=t^*_2$ yield $x=y<z$ and $x<y=z$, respectively, while the strict inequality $x<y<z$ holds for all interior points $t\in(t^*_1, t^*_2)$.

According to Vieta's formulas, $x$ and $z$ serve as the roots of the quadratic equation $w^2-(\beta -t)w+\frac{t(\beta-t)}{\alpha t-1}=0$. Differentiating this relation implicitly with respect to $t$ yields the derivative $w'_t=\frac{\beta +\alpha t^2-2t-w(\alpha t-1)^2}{(\alpha t-1)^2(2w-\beta +t)}$.

\begin{figure}[htbp]
\centering
\begin{minipage}[c]{0.4\textwidth}
  \centering
  \includegraphics[width=\textwidth]{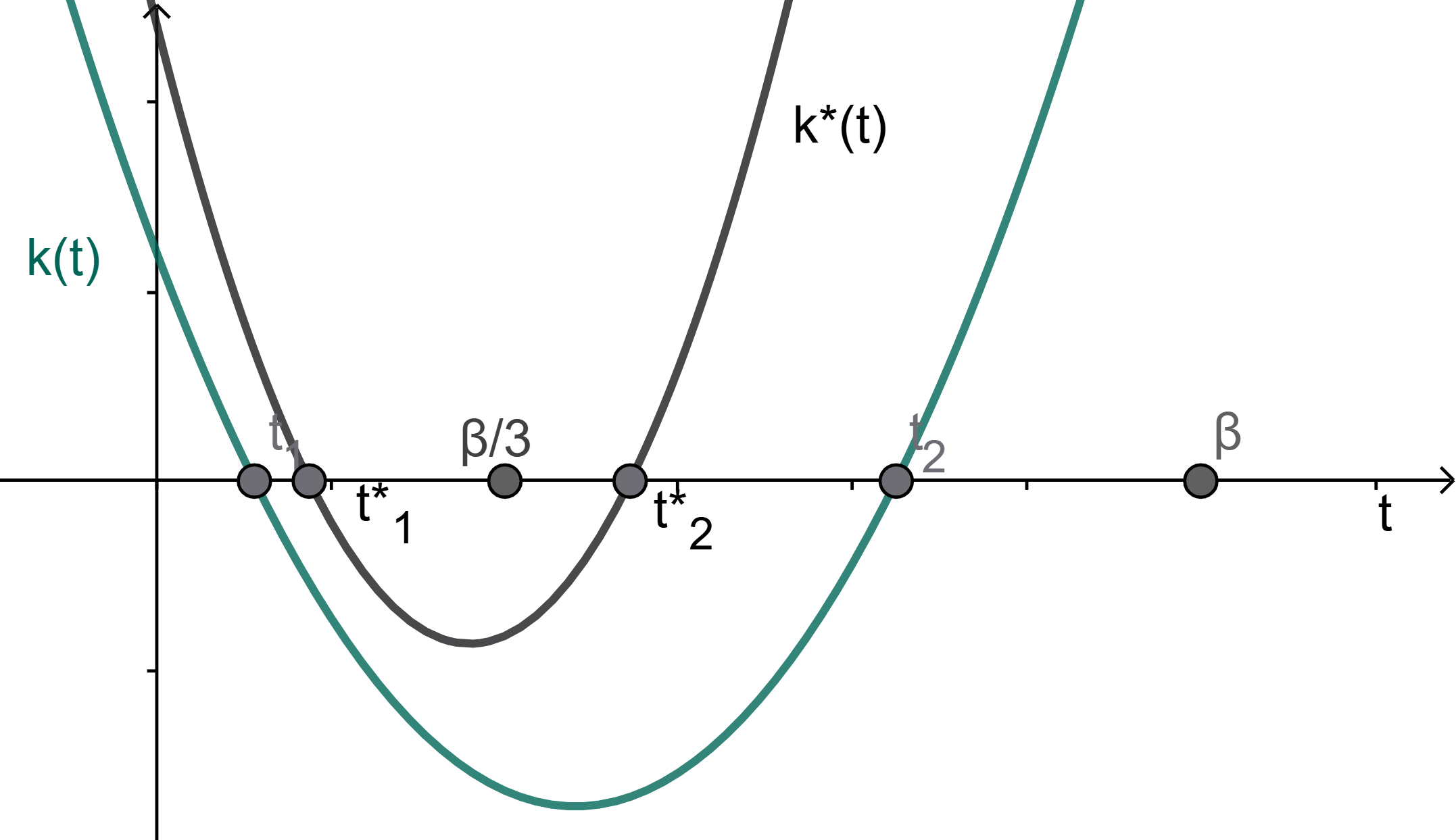}
\end{minipage}
\hspace{0.001cm}
\begin{minipage}[c]{0.55\textwidth}
  \centering
  \includegraphics[width=\textwidth]{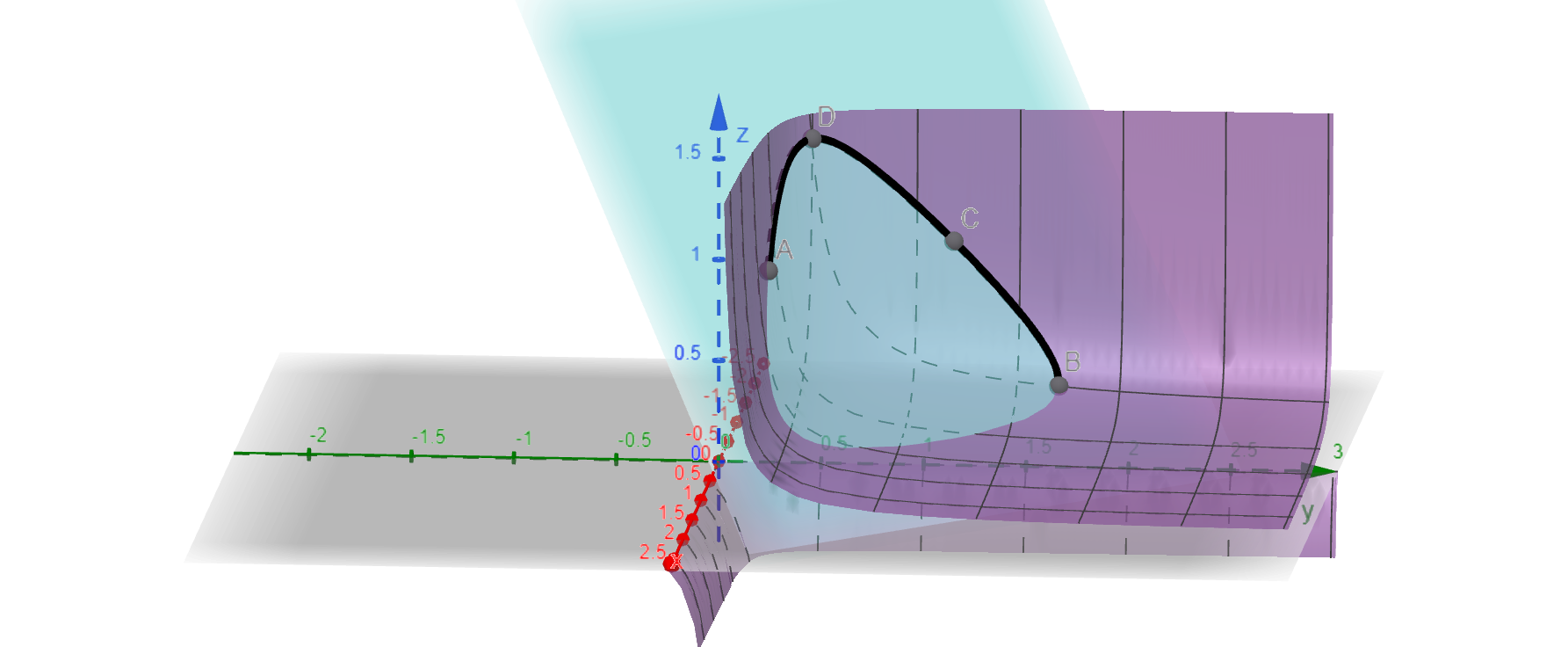}
\end{minipage}
\caption{Left: The graphs of $\kappa(t)$ (green), $\kappa^*(t)$ (black) and their zeros $t_1$, $t_2$, $t^*_1$, $t^*_2$, (black). Right: The graph of the parametric curve (black) representing the intersection of the plane (blue) $x+y+z=\beta$ and surface (purple) $\tfrac{1}{x}+\tfrac{1}{y}+\tfrac{1}{z}=\alpha$.}
\label{fig1}
\end{figure}

So, 
\begin{align*}
    x'_t=\frac{\beta +\alpha y^2-2y-x(\alpha y-1)^2}{(\alpha y-1)^2(2x-\beta +y)}  \text{ and }  z'_t=\frac{\beta +\alpha y^2-2y-z(\alpha y-1)^2}{(\alpha y-1)^2(2z-\beta +y)}.
\end{align*}
Or
\begin{align*}
    x'_t=\frac{x^2(z^2-y^2)}{y^2(x^2-z^2)}  \text{ and }  z'_t=\frac{z^2(x^2-y^2)}{y^2(x^2-z^2)}.
\end{align*}
Analyzing the signs of the derivatives, we observe that as $y$ increases, $x(t)$ decreases ($x'_t < 0$) and $z(t)$ also decreases ($z'_t < 0$). This monotonic behavior of the coordinates highlights how the components of the curve evolve with respect to the parameter $y$.
Then,
\begin{align*}
    g'_t=\frac{2x[\frac{\beta +\alpha y^2-2y}{(\alpha y-1)^2}-x]}{2x-\beta +y}+2y+\frac{2z[\frac{\beta +\alpha y^2-2y}{(\alpha y-1)^2}-z]}{2z-\beta +y}.
\end{align*}
After simplifications we obtain that
\begin{align*}
    g'_t=\frac{-2}{y^2(z+x)}(y-x)(z-y)(yx+yz+zx)<0.
\end{align*}
Consequently, as $y$ increases $g$ decreases.
\end{proof}
We will use the following variant of quotient rule for differentiation:
$$
\left(\frac{f_1}{f_2}\right)'=\left(\frac{f'_1}{f'_2}-\frac{f_1}{f_2}\right)\frac{f'_2}{f_2}.
$$
The following lemma known as the 'L’Hôpital-type Monotonicity Rule' will be essential. This rule is frequently employed in the investigation of optimization problems of the type considered here (see e.g. \cite{pin3}, \cite{ander1}, \cite{hardy}, p. 106, \cite{aliyev2025extremal}).
\begin{lem} Let $-\infty<a<+\infty$, and let $f_1,\ f_2:[a,b]\rightarrow \mathbb{R}$ be continuous on $[a,b]$ and differentiable on $(a,b)$. Let $f'_2(x)\ne0$ on $(a, b)$. Then, if $f'_1(x)/f'_2(x)$ is increasing (decreasing) on $(a , b)$, so are
$$[f_1(x) - f_1(a)] /[f_2(x) - f_2(a)] \text{ and } [f_1(x) - f_1(b)] /[f_2(x) - f_2(b)].$$
If $f'_1(x)/f'_2(x)$ is strictly monotone, then the monotonicity in the conclusion is also strict.
\end{lem}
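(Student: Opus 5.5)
The plan is to prove the rule with the Cauchy mean value theorem combined with the quotient-rule variant displayed just before the lemma. I treat only the "increasing" case; the decreasing case follows by replacing $f_1$ with $-f_1$. I also take $b$ to be finite, as in the applications.

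\textbf{Preliminaries.} First, $f_2'$ has constant sign on $(a,b)$. Since $f_2'$ never vanishes there, Darboux's intermediate value property for derivatives rules out a sign change. Replacing $(f_1,f_2)$ by $(-f_1,-f_2)$ if necessary, which leaves both $f_1'/f_2'$ and the two difference quotients unchanged, I may assume $f_2'>0$ on $(a,b)$. Then $f_2$ is strictly increasing on $[a,b]$. Hence $f_2(x)-f_2(a)>0$ for $x\in(a,b]$ and $f_2(x)-f_2(b)<0$ for $x\in[a,b)$, so both quotients are well defined and differentiable on $(a,b)$.

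\textbf{The quotient with base point $a$.} Set $h(x)=\frac{f_1(x)-f_1(a)}{f_2(x)-f_2(a)}$. The quotient-rule variant gives
$$
h'(x)=\left(\frac{f_1'(x)}{f_2'(x)}-h(x)\right)\frac{f_2'(x)}{f_2(x)-f_2(a)} .
$$
The second factor is positive. By the Cauchy mean value theorem on $[a,x]$, there is $\xi\in(a,x)$ with $h(x)=f_1'(\xi)/f_2'(\xi)$. Since $f_1'/f_2'$ is increasing and $\xi<x$, this gives $h(x)\le f_1'(x)/f_2'(x)$. Therefore $h'\ge0$ on $(a,b)$, and $h$ is increasing. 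If $f_1'/f_2'$ is strictly increasing, the inequality is strict, so $h'>0$ and $h$ is strictly increasing.

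\textbf{The quotient with base point $b$.} Set $k(x)=\frac{f_1(x)-f_1(b)}{f_2(x)-f_2(b)}$. The same formula applies, but now the factor $\frac{f_2'(x)}{f_2(x)-f_2(b)}$ is negative. The Cauchy mean value theorem on $[x,b]$ gives $k(x)=f_1'(\eta)/f_2'(\eta)$ with $\eta\in(x,b)$, so $k(x)\ge f_1'(x)/f_2'(x)$. Both factors in $k'$ are then nonpositive, their product is $\ge0$, and $k$ is increasing. Strict monotonicity of $f_1'/f_2'$ again makes everything strict.

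\textbf{Main obstacle.} The only delicate points are the constant sign of $f_2'$, which needs Darboux's theorem because $f_2'$ is not assumed continuous, and keeping track of the sign of the denominator in the $b$ case. The rest is a direct computation.
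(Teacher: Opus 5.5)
Your proof is correct. Darboux gives $f_2'$ a constant sign. Cauchy's mean value theorem writes each difference quotient as $f_1'(\xi)/f_2'(\xi)$ with $\xi$ on the correct side of $x$. The quotient-rule variant then gives the sign of the derivative in both the $a$ and $b$ cases, and strictness carries through. The paper does not prove this lemma; it only cites it (Anderson--Vamanamurthy--Vuorinen, Pinelis, Hardy--Littlewood--P\'olya), and your argument is the standard proof from those sources, built on the same quotient-rule identity the paper displays just before the statement.
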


\section{Proof of Theorem 1.1}
We will prove the indicated upper and lower bounds for the ratio $\frac{A_n-H_n}{Q_n-H_n}.$ Without loss of generality, assume $\sum_{i=1}^{n}x_i=1.$ 
Denote
$$
f(x_1,\ldots,x_n)=\begin{cases}

{\frac{2}{3}}, & \text{if } x_i=\frac{1}{n}\ (i=1,2,\ldots,n);\\
{\frac{Q_n-\frac{1}{n}}{H_n-\frac{1}{n}}}, & \text{if otherwise.}

\end{cases}
$$
By Eq. (2.1), function $f(x_1,\ldots,x_n)$ is continuous at point $\left(\frac{1}{n}, \frac{1}{n}, \ldots, \frac{1}{n}\right)$. Consequently, the function $f(x_1,\ldots,x_n)$ is both non-vanishing and continuous over the compact set

$$
\mathbb{D}=\{(x_{1},x_{2},\ldots ,x_{n})|\ x_{1},x_{2},\ldots ,x_{n}\ge 0;\sum\limits_{i =
1}^n{x_{i}}=1\},
$$
and
$\frac{A_n-H_n}{Q_n-H_n}=\frac{1}{1-f(x)}$. According to a fundamental theorem in mathematical analysis (see, e.g., \cite{rudin}, Theorem 4.16; \cite{zorich}, Sect. 7.2, p. 424), any continuous real-valued function defined on a compact $\mathbb{D}$ in space $\mathbb{R}^n$ achieves its maximum and minimum values on $\mathbb{D}$. Therefore, there exists a point $(x_1, \dots, x_n) \in \mathbb{D}$ at which $f$ attains its global maximum (or minimum). For this particular point, we may assume without loss of generality that $x_1 \leq x_2 \leq \dots\leq x_n$. We will consider 2 cases $x_1>0$ and $x_1=0$.

\noindent \textbf{The case $x_1>0$.} By Lemma 2.2 for any 3 coordinates $x_i$ at least 2 are equal. As a result the variables can be written as $$x_{1}=x_{2}=\ldots =x_{n-1}=x,\ x_{n}=1-(n-1)x,$$ where $0\le x\le\frac{1}{n-1}$ (see \cite{aliyev1}, \cite{aliyev2025extremal}, \cite{aliyev2}). Hence, it suffices to analyze the function $f(x)=\frac{q(x)-\frac{1}{n}}{h(x)-\frac{1}{n}}$ in the interval $0\le x\le\frac{1}{n-1}$, where 
\begin{align*}
    q(x)=\sqrt{\frac{(n-1)x^2+(1-(n-1)x)^2}{n}}, \\
    h(x)=\frac{n}{\frac{n-1}{x}+\frac{1}{1-(n-1)x}}.
\end{align*}
We calculate 
\begin{align*}
    q'(x)=\frac{(n-1)(nx-1)}{(n-1)x^2+(1-(n-1)x)^2}q(x), \\[1ex]
    h'(x)=\frac{(n-1)(1-(n-2)x)(1-nx)}{x(n-1-n(n-2)x)(1-(n-1)x)}h(x), \\
    q''(x)=\frac{q'(x)}{((n-1)x^2+(1-(n-1)x)^2)(nx-1)}, \\
    h''(x)=\frac{-2h'(x)}{(n-1-(n^2-2n)x)(1-nx)(1-(n-2)x)}.
\end{align*}

Therefore,
\begin{align*}
    \left(\frac{q'(x)}{h'(x)}\right)'=\frac{q'(x)}{h'(x)}\left(\frac{(n-1-n(n-2)x)(1-(n-2)x)}{2((n-1)x^2+(1-(n-1)x)^2)}-1\right)\frac{h''(x)}{h'(x)}
\end{align*}

Let us investigate how the signs of the parts of the product above change in accordance with the values of $x$. First note that
\begin{align*}
    q'(x)=\frac{(n-1)(nx-1)}{(n-1)x^2+(1-(n-1)x)^2} q(x).
\end{align*}
Since $n\ge 3$, $n-1>0$ always. Thus, the denominator is also always positive. Then, we only need to check the sign of $nx-1$.

$$
\begin{cases}

{nx-1<0}, & \text{if } x\in\left(0, \frac{1}{n} \right);\\
{nx-1>0}, & \text{if }  x\in\left(\frac{1}{n}, \frac{1}{n-1} \right).

\end{cases}
$$
Hence,
$$
\begin{cases}

{q'(x)<0}, & \text{if } x\in\left(0, \frac{1}{n} \right);\\
{q'(x)>0}, & \text{if }  x\in\left(\frac{1}{n}, \frac{1}{n-1} \right).

\end{cases}
$$
\begin{align*}
    h''(x)=\frac{-2h(x)}{x(n-1-(n-2)x)^2(1-(n-1)x)}.
\end{align*}
The denominator is always positive for $x\in\left(0, \frac{1}{n-1}\right).$ Therefore, $h''(x)<0$ for $x\in\left(0, \frac{1}{n-1}\right).$
We simplify the expression in parentheses:
\[
\frac{(n-1-n(n-2)x)(1-(n-2)x)}{2((n-1)x^2+(1-(n-1)x)^2)}-1
\]\[=\frac{((n^2-6n+6)x-n+3)(nx-1)}{2((n-1)x^2+(1-(n-1)x)^2}.
\]
The denominator is always positive and the factor $(n^2-6n+6)x-n+3$ in the numerator is always negative for $n\ge3$. Hence, the whole expression is positive when $x\in\left(0, \frac{1}{n}\right)$ and negative when $x\in\left(\frac{1}{n}, \frac{1}{n-1}\right).$
Thus, $ \left(\frac{q'(x)}{h'(x)}\right)'>0$ for $x\in\left(0, \frac{1}{n-1}\right)$ and $ \frac{q'(x)}{h'(x)}$ increases in this interval.
 By Lemma 2.3, $f(x)=\frac{q(x)-\frac{1}{n}}{h(x)-\frac{1}{n}}$ also increases in the interval $0< x<\frac{1}{n-1}$. Also, $\lim_{x\to 0^+}f(x)=1-\sqrt{n}$ and $\lim_{x\to \frac{1}{n-1}^-}f(x)=1-\sqrt{\frac{n}{n-1}}$.
Noting $\frac{A_n-H_n}{Q_n-H_n}=\frac{1}{1-f(x)}$ and using the results for $f(x)$ we can prove the bounds for $\frac{A_n-H_n}{Q_n-H_n}$ in Theorem 1.1.

\noindent \textbf{The case $x_1=0$.} We want to prove that again
$\frac{1}{\sqrt{n}}\le \frac{A_n-H_n}{Q_n-H_n}\le\frac{\sqrt{n-1}}{\sqrt{n}}$, where $0=x_1\le x_2\le \cdots\le x_n$.
If $x_1=0$, then $H_n=0$, and the inequality is written as
\begin{align*}
    \frac{1}{\sqrt{n}}\le \frac{A_n}{Q_n}\le \frac{\sqrt{n-1}}{\sqrt{n}}, \\[1ex]
    \frac{1}{\sqrt{n}}\le \frac{\frac{x_2+\cdots +x_n}{n}}{\sqrt{\frac{x_2^2+\cdots +x_n^2}{n}}}\le \frac{\sqrt{n-1}}{\sqrt{n}}.
\end{align*}
If we look at the left side of the inequality, we get $\sqrt{x_2^2\cdots +x_n^2}\le x_2+\cdots +x_n$, which is a true inequality and can be proved by raising both sides to the second power. The right side, on the other hand, can be proved by using Cauchy-Schwarz inequality (see e.g. \cite{mitrin}, p. 30). That is, 
\begin{align*}
    x_2+\cdots+ x_n\le \sqrt{x_2^2+\cdots +x_n^2}\cdot \sqrt{n-1}, \\[1ex]
    x_2\cdot 1+\cdots +x_n\cdot 1\le \sqrt{x_2^2+\cdots +x_n^2}\cdot \sqrt{\underbrace{1^2 + 1^2 + \dots + 1^2}_{{n-1 \text{ times}}}}.
\end{align*}
We conclude that the bounds are true for the case $x_1=0$ as well.

\begin{rem}
It would be interesting to study the more general question about
the best constants for the inequalities $C_1\le \frac{A_n-H_n}{P_\alpha-H_n}\le C_2$ for all possible values of $\alpha$. Here $P_\alpha$ is the power mean of non-negative real numbers $x_1,\ldots,x_n$:
$$
P_\alpha(x_1,\ldots,x_n)=\left(\frac{\sum\limits_{i =
1}^n{x_{i}^\alpha}}{n}\right)^\frac{1}{\alpha}.
$$
\end{rem}

\section{Conclusion} In the paper the inequalities $\frac{1}{\sqrt{n}}\le \frac{A_n-H_n}{Q_n-H_n}\le \sqrt{\frac{n-1}{n}}$ $(n\ge3)$ are proved.  It is shown that the constants $\frac{1}{\sqrt{n}},\ \sqrt{\frac{n-1}{n}}$ are the best possible. The obtained results generalize the results of T. Mitev for small values of $n$.

\section*{Acknowledgments}
The authors thank for the license of Maple 2025
software provided as a part of Maple Ambassador Program.

\section{Declarations}
\textbf{Ethical Approval.}
Not applicable.
 \newline \textbf{Competing interests.}
None.
  \newline \textbf{Authors' contributions.} 
Not applicable.
  \newline \textbf{Funding.}
This work was completed with the support of ADA University Faculty Research and Development Fund.
  \newline \textbf{Availability of data and materials.}
Not applicable



\end{document}